\numberwithin{equation}{section}
\newtheorem{Theorem}[equation]{Theorem}
\def\XXint#1#2#3{{\setbox0=\hbox{$#1{#2#3}{\int}$}
\vcenter{\hbox{$#2#3$}}\kern-.5\wd0}}
\def\bbR{\mathbb{R}}
\begin{document}

\title[Sharp weighted estimates involving one supremum]{Sharp weighted estimates involving one supremum}

\author{Kangwei Li}

\address{BCAM, Basque Center for Applied Mathematics, Mazarredo, 14. 48009 Bilbao Basque Country, Spain}
\email{kli@bcamath.org}

\thanks{The author is supported by Juan de la Cierva-Formaci\'on 2015 FJCI-2015-24547, the Basque Government through the BERC 2014-2017 program and
 Spanish Ministry of Economy and Competitiveness MINECO: BCAM Severo Ochoa excellence accreditation SEV-2013-0323.
}

\date{\today}

\keywords{}

\begin{abstract}
In this note, we study the sharp weighted estimate involving one supremum. In particular, we give a positive answer to an open question raised by Lerner and Moen \cite{LM}. We also extend the result to rough homogeneous singular integral operators.
\end{abstract}

\maketitle

\section{Introduction and main result}
Our main object is the following so-called sparse operator:
\[
A_{\mathcal S} (f)(x)= \sum_{Q\in \mathcal S} \langle f \rangle_Q \chi^{}_Q(x),
\]
where $\mathcal S\subset \mathcal D$ is a sparse family, i.e., for all (dyadic) cubes $Q\in \mathcal S$, there exists $E_Q\subset Q$ which are pairwise disjoint and $|E_Q|\ge \gamma |Q|$ with $0<\gamma<1$, and $\langle f \rangle_Q=\frac 1{|Q|}\int_Q f$.  We only consider the sparse operator because  it dominates Calder\'on-Zygmund operator pointwisely, see \cite{CR, LN, Lac15, HRT, Ler2016}.

We are going to study the sharp weighted bounds of $A_{\mathcal S} $. Before that, let us recall
\begin{align*}
[w]_{A_p}&=\sup_Q A_p(w, Q):=\sup_Q \langle w \rangle_Q \langle w^{1-p'} \rangle_Q^{p-1},\\
 [w]_{A_\infty}&= \sup_Q A_\infty(w, Q):=\sup_Q\frac{\langle M(w\chi_Q)\rangle_Q}{\langle w \rangle_Q}.
\end{align*}
In \cite{HL}, Hyt\"onen and Lacey proved the following estimate:
\begin{equation}\label{eq:hl}
\|A_{\mathcal S} \|_{L^p(w)}\le c_n [w]_{A_p}^{\frac 1p}([w]_{A_\infty}^{\frac 1{p'}}+[w^{1-p'}]_{A_\infty}^{\frac 1{p}}),
\end{equation}
which generalizes the famous $A_2$ theorem, obtained by Hyt\"onen in \cite{H2012} (We also remark that when $p=2$, \eqref{eq:hl} was obtained by Hyt\"onen and P\'erez in \cite{HP}).
It was also conjectured in \cite{HL} that
\begin{equation}
\|A_{\mathcal S} \|_{L^p(w)}\le c_n ([w]_{A_p^{\frac 1p}A_\infty^{\frac 1{p'}}}+[w^{1-p'}]_{A_p^{\frac 1{p'}}A_\infty^{\frac 1{p}}}),
\end{equation}
where
\[
[w]_{A_p^{\alpha}A_r^{\beta}}:=\sup_Q A_p(w, Q)^\alpha A_r(w, Q)^\beta.
\]
This conjecture, which is usually referred as the one supremum conjecture, is still open.  Before this conjecture was formulated, Lerner \cite{Ler2013} obtained the following mixed $A_p$-$A_r$ estimate:
\[
\|A_{\mathcal S} \|_{L^p(w)}\le c_{n,p,r}( [w]_{A_p^{\frac 1{p-1}}A_r^{1-\frac 1{p-1}}}+[w^{1-p'}]_{A_{p'}^{\frac 1{p'-1}}A_r^{1-\frac 1{p'-1}}}),
\]
which was further extended by Lerner and Moen \cite{LM} to the $r=\infty$ case with Hrus\v c\v ev $A_\infty$ constant:
\[
\|A_{\mathcal S} \|_{L^p(w)}\le c_{n,p}( [w]_{A_p^{\frac 1{p-1}}(A_\infty^{\exp})^{1-\frac 1{p-1}}}+[w^{1-p'}]_{A_{p'}^{\frac 1{p'-1}}(A_\infty^{\exp})^{1-\frac 1{p'-1}}}),
\]
where $A_\infty^{\exp}(w, Q)=\langle w \rangle_Q \exp(\langle \log w^{-1} \rangle_Q)$.  Some further extension can be also found in \cite{Li15}. Comparing this result with the one supremum conjecture, besides replacing the Fujii-Wilson $A_\infty$ constant by Hrus\v c\v ev $A_\infty$ constant, the main difference is that the power of $A_p$ constant is larger, leading to a result which is weaker than the one supremum conjecture. However, there is also another idea, which is replacing $A_p$ by $A_q$, where $q<p$. Our main result follows from this idea and it is formulated as follows\begin{Theorem}\label{thm:main}
Let $1\le q<p$ and $w\in A_q$. Then
\[
\|A_{\mathcal S} \|_{L^p(w)}\le c_{n,p, q}[w]_{A_q^{\frac 1{p}}(A_\infty^{\exp})^{\frac 1{p'}}}.
\]
\end{Theorem}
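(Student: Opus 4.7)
By the standard duality argument, the estimate $\|A_{\mathcal{S}}\|_{L^p(w)}\le cK$ with $K:=[w]_{A_q^{1/p}(A_\infty^{\exp})^{1/p'}}$ is equivalent to the bilinear bound
\[
\sum_{Q\in\mathcal{S}}\langle f\rangle_Q\int_Q g\,w\,dx\;\le\;c_{n,p,q}\,K\,\|f\|_{L^p(w)}\,\|g\|_{L^{p'}(w)}
\]
for all nonnegative $f,g$. The sparseness of $\mathcal{S}$ allows one to replace $|Q|$ by $\gamma^{-1}|E_Q|$, reducing matters to controlling the bilinear sparse form $\sum_Q\langle f\rangle_Q\langle gw\rangle_Q|E_Q|$.

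The first step is the weighted Hölder inequality, which uses $w\in A_q$ to give
\[
\langle f\rangle_Q\;\le\;A_q(w,Q)^{1/q}\Bigl(\frac{1}{w(Q)}\int_Q f^q w\Bigr)^{1/q}.
\]
The hypothesis $q<p$ is crucial: it yields $p/q>1$, so the dyadic weighted maximal operator $M_w^d$ is bounded on $L^{p/q}(w)$; this is precisely what permits a sparse Carleson embedding of the $w$-averages of $f^q$ against $\|f\|_{L^p(w)}^p$. Writing $\langle gw\rangle_Q=\langle w\rangle_Q\cdot w(Q)^{-1}\int_Q gw$ similarly, one obtains a natural ``$A_q^{1/q}$''-type bilinear sum; however, this carries the excess factor $A_q(w,Q)^{(p-q)/(pq)}$ compared to the target power $A_q(w,Q)^{1/p}$.

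The core of the argument is to convert this surplus into the desired $(A_\infty^{\exp})^{1/p'}$. My plan is to introduce a principal cubes family $\mathcal{F}$ via a stopping-time algorithm with doubling of the $w$-weighted averages of $f^q$ and of $g$, so that inside each stopping tree $\mathcal{S}(F)$ these averages are essentially frozen at their values on $F$. The inner sum is then estimated using the identity $A_\infty^{\exp}(w,Q)=\langle w\rangle_Q/\exp(\langle\log w\rangle_Q)$ together with the sharp reverse-Hölder (John--Nirenberg) inequality for $\log w$ available under the $A_\infty^{\exp}$ hypothesis; this converts the geometric-mean factor $\exp(-\langle\log w\rangle_Q)$ into a controlled power of $[w]_{A_\infty^{\exp}}^{1/p'}$. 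The main obstacle is precisely this quantitative exchange, matching exactly the exponents $1/p$ and $1/p'$ with a constant depending only on $n,p,q$; once achieved, the final summation over the sparse principal family $\mathcal{F}$ via standard weighted Carleson estimates and maximal-function bounds closes the argument.
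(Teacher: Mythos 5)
Your proposal is an outline, not a proof: you explicitly leave the "main obstacle" (the quantitative exchange that turns the surplus $A_q$-power into $(A_\infty^{\exp})^{1/p'}$) unresolved, and it is precisely there that the argument fails. After your weighted H\"older step you hold, per cube, the factor $A_q(w,Q)^{1/q} = A_q(w,Q)^{1/p}\,A_q(w,Q)^{1/q-1/p}$, and you propose to "convert" the surplus $A_q(w,Q)^{1/q-1/p}$ into $A_\infty^{\exp}(w,Q)^{1/p'}$. But the pointwise inequality runs in the wrong direction: since $\exp(\langle\log w^{-1}\rangle_Q)\le \langle w^{1-q'}\rangle_Q^{q-1}$ for $q>1$ (Jensen), one always has $A_\infty^{\exp}(w,Q)\le A_q(w,Q)$, so replacing a power of $A_q$ by the same power of $A_\infty^{\exp}$ makes the bound \emph{smaller}, not larger -- it is not a legitimate estimate. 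Likewise, for $q=1$ one has $(p-1)/p=1/p'$ but $A_\infty^{\exp}(w,Q)\le A_1(w,Q)$, so the needed inequality again points the wrong way. The principal-cubes/stopping-time machinery you invoke does not rescue this: stopping on $w$-averages of $f^q$ and $g$ freezes those averages but does nothing to trade a local $A_q$-characteristic for a local $A_\infty^{\exp}$-characteristic, and the "sharp reverse H\"older for $\log w$ under the $A_\infty^{\exp}$ hypothesis" you mention is not available here -- $A_\infty^{\exp}$ is not a hypothesis of the theorem; only $w\in A_q$ is.

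The paper's proof sidesteps this entirely by a different factorization: it observes that
\[
A_q(w,Q)=\langle w\rangle_Q\,\langle w^{1-q'}\rangle_Q^{q-1}
        =\langle w\rangle_Q\,\langle \sigma^{1/p'}\rangle_{\bar A,Q}^p,
\qquad \bar A(t)=t^{\,p/(q-1)},\ \sigma=w^{1-p'},
\]
that is, $A_q$ is an Orlicz power-bumped $A_p$. Applying the generalized (Orlicz) H\"older inequality to $\langle f\rangle_Q=\langle f w^{1/p}\cdot w^{-1/p}\rangle_Q\le \langle f w^{1/p}\rangle_{A,Q}\langle w^{-1/p}\rangle_{\bar A,Q}$ and multiplying by $1=\exp(\langle\log w^{-1}\rangle_Q)^{1/p'}\exp(\langle\log w\rangle_Q)^{1/p'}$ produces the factor $A_q(w,Q)^{1/p}A_\infty^{\exp}(w,Q)^{1/p'}$ exactly, with no surplus to absorb; the remaining sums close with one application of discrete H\"older, sparsity plus the $L^p$-boundedness of the Orlicz maximal operator $M_A$ for the $f$-part, and a $w$-Carleson embedding for the $g$-part using that $\{\exp(\langle\log w\rangle_Q)|Q|\}_{Q\in\mathcal S}$ is a $w$-Carleson sequence. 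You would do well to look for a pointwise identity or Orlicz factorization that \emph{produces} the mixed constant directly, rather than trying to post-process a bound that is already too large at the level of a single cube.
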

This result was conjectured by Lerner and Moen, see \cite[p.251]{LM}. It improves the previous result of Duoandikoetxea \cite{Duo}, i.e.,
\[
\|A_{\mathcal S} \|_{L^p(w)}\le c_{n,p, q}[w]_{A_q},
\] proved by means of extrapolation. In the next section, we will give a proof for this theorem. Extensions to rough homogeneous singular integrals will be provided in Section \ref{sec:rough}.

\section{Proof of Theorem \ref{thm:main}}
Before we state our proof, we would like to demonstrate our understanding of this $A_q$ condition, which allows us to avoid extrapolation or interpolation completely. We can rewrite the $A_q$ condition in the following form:
\begin{align*}
\langle w \rangle_Q \langle w^{1-q'} \rangle_Q^{q-1}&= \langle w \rangle_Q \langle w^{(1-p')\frac{p-1}{q-1}} \rangle_Q^{q-1}\\
&:=\langle w \rangle_Q \langle \sigma^{\frac 1{p'}} \rangle_{\bar A, Q}^p,
\end{align*}
where $\bar A(t)= t^{p'(p-1)/(q-1)}=t^{\frac p{q-1}}$ and as usual, $\sigma=w^{1-p'}$. So we have seen that $A_q$ condition is actually power bumped $A_p$ condition! Now we are ready to present our proof. Without loss of generality, we can assume $f\ge 0$. By duality, we have
\begin{align*}
\|A_{\mathcal S} (f)\|_{L^p(w)}&=\sup_{\|g\|_{L^{p'}(w)}=1}\int A_{\mathcal S} (f) g w\\
&= \sup_{\|g\|_{L^{p'}(w)}=1}  \sum_{Q\in \mathcal S} \langle f \rangle_Q \langle g \rangle_Q^w w(Q)\\
&\le \sup_{\|g\|_{L^{p'}(w)}=1}  \sum_{Q\in \mathcal S} \langle f w^{\frac 1p}\rangle_{A,Q} \langle w^{-\frac 1p}\rangle_{\bar A, Q}\langle g \rangle_Q^w \langle w\rangle_Q |Q|\\
&\times \exp(\langle \log w^{-1}\rangle_Q)^{\frac 1{p'}}\exp(\langle \log w\rangle_Q)^{\frac 1{p'}}\\
&\le [w]_{A_q^{\frac 1{p}}(A_\infty^{\exp})^{\frac 1{p'}}}\sup_{\|g\|_{L^{p'}(w)}=1} \Big( \sum_{Q\in \mathcal S} \langle f w^{\frac 1p}\rangle_{A,Q}^p |Q| \Big)^{\frac 1p}\\
&\times \Big( \sum_{Q\in \mathcal S} (\langle g \rangle_Q^w)^{p'} \exp(\langle \log w\rangle_Q)|Q| \Big)^{\frac 1{p'}}\\
&\le c_n\gamma^{-1} p\|M_A\|_{L^p}[w]_{A_q^{\frac 1{p}}(A_\infty^{\exp})^{\frac 1{p'}}}\|f\|_{L^p(w)},
\end{align*}
where in the last step, we have used the sparsity and the Carleson embedding theorem.

\section{Rough homogeneous singular integral operators}\label{sec:rough}
Recall that the rough homogeneous singular integral operator $T_\Omega$ is defined by
\[
T_\Omega(f)(x)=p.v. \int_{\bbR^n}\frac{\Omega(y')}{|y|^n}f(x-y) dy,
\]
where $\int_{S^{n-1}} \Omega=0$. The quantitative weighted bound of $T_\Omega$ with $\Omega \in L^\infty$ has been studied in \cite{HRT}, based on refinement of the ideas in \cite{DR}, see also a recent paper by the author and P\'erez, Rivera-R\'ios and Roncal \cite{LPRR}, relying upon the sparse domination formula established in \cite{CCDO}.

Our main result in this section is stated as follows.
\begin{Theorem}\label{thm:rough}
Let $1\le q<p$, $w\in A_q$ and $\Omega\in L^\infty(S^{n-1})$. Then
\[
\|T_\Omega \|_{L^p(w)}\le c_{n,p, q}[w]_{A_q^{\frac 1{p}}(A_\infty^{\exp})^{\frac 1{p'}}}.
\]
\end{Theorem}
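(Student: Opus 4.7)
The plan is to combine the CCDO sparse domination for $T_\Omega$ with a power-bumped variant of the argument in the proof of Theorem~\ref{thm:main}. By the sparse domination formula of Conde-Alonso--Culiuc--Di~Plinio--Ou, together with transposition ($T_\Omega^\ast=T_{\widetilde\Omega}$ with $\widetilde\Omega(x)=\Omega(-x)\in L^\infty(S^{n-1})$), for every $r>1$ there exist a sparse family $\mathcal{S}$ and a constant $C_{n,r}$ such that
\[
|\langle T_\Omega f,g\rangle|\le C_{n,r}\|\Omega\|_\infty\sum_{Q\in\mathcal{S}}\langle f\rangle_{r,Q}\langle g\rangle_Q|Q|.
\]

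Fix $r\in(1,p/q)$ (nonempty since $q<p$) and set $\tau=rp/(p-r(q-1))$, so that $\tau<p$ (the limit $q=1$ gives $\tau=r$). By H\"older's inequality with exponents $p/(p-r(q-1))$ and $p/(r(q-1))$ (using $L^\infty$ instead of a power when $q=1$),
\[
\langle f\rangle_{r,Q}=\langle |fw^{1/p}|^r w^{-r/p}\rangle_Q^{1/r}\le\langle fw^{1/p}\rangle_{\tau,Q}\langle w^{1-q'}\rangle_Q^{(q-1)/p},
\]
and $\langle w\rangle_Q^{1/p}\langle w^{1-q'}\rangle_Q^{(q-1)/p}=A_q(w,Q)^{1/p}$. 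From here the proof simply imitates the one of Theorem~\ref{thm:main}: dualize via $\|T_\Omega f\|_{L^p(w)}=\sup_{\|h\|_{L^{p'}(w)}=1}\int T_\Omega f\cdot hw$ and substitute $g=hw$ in the sparse form so that $\langle g\rangle_Q=\langle h\rangle_Q^w\langle w\rangle_Q$; insert $\exp(\langle\log w^{-1}\rangle_Q)^{1/p'}\exp(\langle\log w\rangle_Q)^{1/p'}=1$ and regroup to extract the factor $[w]_{A_q^{1/p}(A_\infty^{\exp})^{1/p'}}$; finally, apply H\"older in $Q$ with exponents $(p,p')$ and two Carleson embeddings, bounding $(\sum_Q\langle fw^{1/p}\rangle_{\tau,Q}^p|Q|)^{1/p}\lesssim\|M_\tau\|_{L^p}\|f\|_{L^p(w)}$ (finite precisely because $\tau<p$, where $M_\tau f=M(|f|^\tau)^{1/\tau}$) and $(\sum_Q(\langle h\rangle_Q^w)^{p'}\exp(\langle\log w\rangle_Q)|Q|)^{1/p'}\lesssim\|h\|_{L^{p'}(w)}$ exactly as in Section~2.

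The main obstacle is the simultaneous compatibility of the two constraints on the auxiliary parameter $r$: strictly larger than $1$ so that the CCDO sparse form applies, and strictly less than $p/q$ so that the $L^\tau$-maximal function is bounded on $L^p$. The hypothesis $q<p$ is exactly what makes the interval $(1,p/q)$ nonempty; fixing any $r$ inside it depending only on $n,p,q$ (for concreteness, $r=(1+p/q)/2$) renders $C_{n,r}$ (which typically blows up like $(r-1)^{-1}$ as $r\to 1^+$) and $\|M_\tau\|_{L^p}$ bounded in terms of $n,p,q$, yielding the clean constant $c_{n,p,q}$ in the conclusion.
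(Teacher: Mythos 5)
Your proof is correct and essentially identical to the paper's. You work directly with the sparse bilinear form and spell out the paper's Orlicz bumps as plain H\"older exponents: the Young function $\bar B(t)=t^{p/(r(q-1))}$ used in the paper is a pure power whose conjugate $B$ makes $\langle f^r w^{r/p}\rangle_{B,Q}^{1/r}$ collapse to exactly your $\langle fw^{1/p}\rangle_{\tau,Q}$ with $\tau=rp/(p-r(q-1))$, with the same range $r\in(1,p/q)$, the same identity $\langle w^{-r/p}\rangle_{\bar B,Q}^{1/r}=\langle w^{1-q'}\rangle_Q^{(q-1)/p}$, and the same boundedness criterion $\|M_B\|_{L^{p/r}}<\infty\Leftrightarrow\tau<p$.
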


\begin{proof}
The proof is again based on the sparse domination formula \cite{CCDO} (see also a very recent paper by Lerner \cite{Ler2017}). It suffices to prove
\[
\|A_{r,\mathcal S} \|_{L^p(w)}\le c_{n,p, r,q}[w]_{A_q^{\frac 1{p}}(A_\infty^{\exp})^{\frac 1{p'}}},
\]
where $1<r<\frac{p}{q}$ and
\[
A_{r,\mathcal S} (f)=\sum_{Q\in \mathcal S} \langle |f|^r \rangle_Q^{\frac 1 r} \chi_Q.
\]
Denote $\bar B(t)=t^{\frac{p'(p-1)}{r(q-1)}}=t^{\frac p{r(q-1)}}$. Again, we assume $f\ge 0$. By duality, we have
\begin{align*}
\|A_{r,\mathcal S} (f)\|_{L^p(w)}&=\sup_{\|g\|_{L^{p'}(w)}=1}\int A_{r, \mathcal S} (f) g w\\
&= \sup_{\|g\|_{L^{p'}(w)}=1}  \sum_{Q\in \mathcal S} \langle f^r \rangle_Q^{\frac 1r} \langle g \rangle_Q^w w(Q)\\
&\le \sup_{\|g\|_{L^{p'}(w)}=1}  \sum_{Q\in \mathcal S} \langle f^r w^{\frac r p}\rangle_{B,Q}^{\frac 1r} \langle w^{-\frac r p}\rangle_{\bar B, Q}^{\frac 1r}\langle g \rangle_Q^w \langle w\rangle_Q |Q|\\
&\times \exp(\langle \log w^{-1}\rangle_Q)^{\frac 1{p'}}\exp(\langle \log w\rangle_Q)^{\frac 1{p'}}\\
&\le [w]_{A_q^{\frac 1{p}}(A_\infty^{\exp})^{\frac 1{p'}}}\sup_{\|g\|_{L^{p'}(w)}=1} \Big( \sum_{Q\in \mathcal S} \langle f^r w^{\frac rp}\rangle_{B,Q}^\frac pr |Q| \Big)^{\frac 1p}\\
&\times \Big( \sum_{Q\in \mathcal S} (\langle g \rangle_Q^w)^{p'} \exp(\langle \log w\rangle_Q)|Q| \Big)^{\frac 1{p'}}\\
&\le c_n \gamma^{-1}p\|M_B\|_{L^{p/r}}^{\frac 1r}[w]_{A_q^{\frac 1{p}}(A_\infty^{\exp})^{\frac 1{p'}}}\|f\|_{L^p(w)},
\end{align*}
where again, in the last step we have used the sparsity and the Carleson embedding theorem.
\end{proof}

\end{document}